\newtheorem{theorem}{Theorem}
\numberwithin{equation}{section}
\begin{document}

\title{A variational restriction theorem}

\author[V. Kova\v{c}]{Vjekoslav Kova\v{c}}
\address{Vjekoslav Kova\v{c}, Department of Mathematics, Faculty of Science, University of Zagreb, Bijeni\v{c}ka cesta 30, 10000 Zagreb, Croatia}
\email{vjekovac@math.hr}

\author[D. Oliveira e Silva]{Diogo Oliveira e Silva}
\address{Diogo Oliveira e Silva, School of Mathematics, University of Birmingham, Edgbaston, Birmingham, B15 2TT, England}
\email{d.oliveiraesilva@bham.ac.uk}

%\date{\today}

\renewcommand{\subjclassname}{\textup{2020} Mathematics Subject Classification}

\subjclass[2020]{42B10} % Fourier and Fourier-Stieltjes transforms and other transforms of Fourier type

\keywords{Fourier restriction, variational estimates, Gaussian domination}

\begin{abstract}
We establish variational estimates related to the problem of restricting the Fourier transform of a three-dimensional function to the two-dimensional Euclidean sphere. At the same time, we give a short survey of the recent field of maximal Fourier restriction theory.
\end{abstract}

\maketitle

%%%%%%%%%%%%%%%%%%%%%%%%%%%%%%%%%%%%%%%%%%%%%%%%%%%%%%%%%%%%%%%%%%%%%%%%%%%%%%%%%%%%%%%%%%%%%%%%%%%%%%%%%%%%%%%%%

\section{Introduction}

This short note serves primarily as a commentary on the very recent topic of pointwise estimates for the operator that restricts the Fourier transform to a hypersurface. We will concentrate exclusively on the case of the two-dimensional unit sphere $\mathbb{S}^2$ in three-dimensional Euclidean space $\mathbb{R}^3$. This both simplifies the exposition and enables the formulation of more general results.

Classical Fourier restriction theory seeks for {\it a priori} $\textup{L}^p$-estimates for the operator $f\mapsto \widehat{f}|_S$, where $S$ is a hypersurface in the Euclidean space.
In the case of $\mathbb{S}^2\subset\mathbb{R}^3$, the endpoint Tomas--Stein inequality \cite{S93:habook, T75:restr} reads as follows:
\begin{equation}\label{eq:restriction}
\big\| \widehat{f}\,\big\vert_{\mathbb{S}^2} \big\|_{\textup{L}^2(\mathbb{S}^2,\sigma)} \lesssim \|f\|_{\textup{L}^{4/3}(\mathbb{R}^3)}.
\end{equation}
Here $\sigma$ denotes the standard surface measure on $\mathbb{S}^2$.
It is well-known that $4/3$ is the largest exponent that can appear on the right-hand side of \eqref{eq:restriction} provided that we stick to the $\textup{L}^2$-norm on the left-hand side. Estimate \eqref{eq:restriction} enables the {\it Fourier restriction operator}
\begin{equation}\label{eq:fourrestrop}
\mathcal{R}\colon\textup{L}^{4/3}(\mathbb{R}^3)\to\textup{L}^2(\mathbb{S}^2,\sigma)
\end{equation}
to be defined via an approximation of identity argument as follows.
Fix a complex-valued Schwartz function $\chi$ such that $\int_{\mathbb{R}^3}\chi(x)\,\textup{d}x=1$,
and write $\chi_{\varepsilon}$ for the $\textup{L}^1$-normalized dilate of $\chi$, defined as
\[ \chi_{\varepsilon}(x):=\varepsilon^{-3}\chi(\varepsilon^{-1}x), \]
for $x\in\mathbb{R}^3$ and $\varepsilon>0$.
Given $f\in\textup{L}^{4/3}(\mathbb{R}^3)$, then, thanks to \eqref{eq:restriction}, $\mathcal{R}f$ can be defined as the limit
\[ \lim_{\varepsilon\to0^+}(\widehat{f}\ast\chi_{\varepsilon})\big\vert_{\mathbb{S}^2} \]
in the norm of the space $\textup{L}^2(\mathbb{S}^2,\sigma)$.

Maximal restriction theorems were recently inaugurated by M\"{u}ller, Ricci, and Wright \cite{MRW16:maxrestr}.
In that work, the authors considered general $\textup{C}^2$ planar curves with nonnegative signed curvature equipped with affine arclength measure, and established a maximal restriction theorem in the full range of exponents where the usual restriction estimate is known to hold.
Shortly thereafter, Vitturi \cite{V17:maxrestr} provided an elementary argument which leads to a partial generalization to higher-dimensional spheres. In $\mathbb{R}^3$, Vitturi's result covers the full Tomas--Stein range, whose endpoint estimate amounts to
\begin{equation}\label{eq:maxrestriction}
\Big\| \sup_{\varepsilon>0} \big| (\widehat{f}\ast\chi_{\varepsilon})(\omega) \big| \Big\|_{\textup{L}^2_{\omega}(\mathbb{S}^2,\sigma)} \lesssim_{\chi} \|f\|_{\textup{L}^{4/3}(\mathbb{R}^3)}.
\end{equation}
An easy consequence of \eqref{eq:maxrestriction} and of obvious convergence properties in the dense class of Schwartz functions is the fact that the limit
\begin{equation}\label{eq:pointrestrictionres}
\lim_{\varepsilon\to0^+}(\widehat{f}\ast\chi_{\varepsilon})(\omega)
\end{equation}
exists for each $f\in\textup{L}^{4/3}(\mathbb{R}^3)$ and for $\sigma$-almost every $\omega\in\mathbb{S}^2$.
This enables us to recover the operator \eqref{eq:fourrestrop} also in the pointwise sense, and not only in the $\textup{L}^2$-norm, which was the main motivation behind the paper \cite{MRW16:maxrestr}.

For the elegant proof of \eqref{eq:maxrestriction}, Vitturi \cite{V17:maxrestr} used the following equivalent, non-oscillatory reformulation of the ordinary restriction estimate \eqref{eq:restriction}:
\begin{equation}\label{eq:restriction2}
\Big| \int_{(\mathbb{S}^2)^2} g(\omega) \overline{g(\omega')} h(\omega-\omega') \,\textup{d}\sigma(\omega) \,\textup{d}\sigma(\omega') \Big|
\lesssim \|g\|_{\textup{L}^2(\mathbb{S}^2,\sigma)}^2 \|h\|_{\textup{L}^{2}(\mathbb{R}^3)}.
\end{equation}
The proof of the equivalence between \eqref{eq:restriction} and \eqref{eq:restriction2} amounts to passing to the adjoint operator (i.e.\@ to a Fourier extension estimate) and expanding out the $\textup{L}^4$-norm using Plancherel's identity. These steps make the choice of exponents $4/3$ and $2$ into the most convenient one. The advantage of the expanded adjoint formulation \eqref{eq:restriction2} is that one can easily insert the iterated maximal function operator in $h$, and simply invoke its boundedness on $\textup{L}^2(\mathbb{R}^3)$. We refer the reader to \cite{V17:maxrestr} for details. In some sense, we will be following a similar step below.

In this paper we quantify the pointwise convergence result \eqref{eq:pointrestrictionres} in terms of the so-called variational norms.
These were introduced by L\'{e}pingle \cite{L76:var} in the context of probability theory. Various modifications were then defined and used by Bourgain for numerous problems in harmonic analysis and ergodic theory; see for instance \cite{B89:pt}.
Recall that, given a function $a\colon(0,\infty)\to\mathbb{C}$ and an exponent $\varrho\in[1,\infty)$, the \emph{$\varrho$-variation seminorm} of $a$ is defined as
\[ \|a\|_{\widetilde{\textup{V}}^{\varrho}} := \sup_{\substack{m\in\mathbb{N}\\ 0<\varepsilon_0<\varepsilon_1<\cdots<\varepsilon_m}}
\Big( \sum_{j=1}^{m} |a(\varepsilon_{j})-a(\varepsilon_{j-1})|^{\varrho} \Big)^{1/\varrho}. \]
In order to turn it into a \emph{$\varrho$-variation norm}, we simply add the term $|a(\varepsilon_0)|^{\varrho}$ as follows:
\[ \|a\|_{\textup{V}^{\varrho}} := \sup_{\substack{m\in\mathbb{N}\\ 0<\varepsilon_0<\varepsilon_1<\cdots<\varepsilon_m}}
\Big( |a(\varepsilon_0)|^{\varrho} + \sum_{j=1}^{m} |a(\varepsilon_{j})-a(\varepsilon_{j-1})|^{\varrho} \Big)^{1/\varrho}. \]
Since this is not the standard convention in the literature, we make a distinction as we are going to use both quantities. Clearly, $\|a\|_{\textup{V}^{\varrho}}$ controls both $\sup_{\varepsilon>0}|a(\varepsilon)|$ and the number of ``jumps'' of $a(\varepsilon)$ as $\varepsilon\to0^+$ (and even as $\varepsilon\to\infty$, which is not the interesting case here).
A particular instance of the main result of this paper (which is Theorem~\ref{thm:maintheorem} below) is the following variational generalization of estimate \eqref{eq:maxrestriction},
\begin{equation}\label{eq:varrest}
\Big\| \big\| (\widehat{f}\ast\chi_{\varepsilon})(\omega) \big\|_{\textup{V}^{\varrho}_{\varepsilon}} \Big\|_{\textup{L}^2_{\omega}(\mathbb{S}^2,\sigma)} \lesssim_{\chi,\varrho} \|f\|_{\textup{L}^{4/3}(\mathbb{R}^3)},
\end{equation}
when $\varrho\in(2,\infty)$ and $f\in\textup{L}^{4/3}(\mathbb{R}^3)$. The reader can still consider $\chi$ to be a fixed Schwartz function, but we are just about to discuss more general possible choices.
Variational estimates like \eqref{eq:varrest} for various averages and truncations of integral operators have been extensively studied; the papers \cite{AJS98,BMSW18,CJRW00,JSW08:var,MSZ20:var} are just a sample from the available literature.
In addition to quantifying the mere convergence, such estimates establish convergence in the whole $\textup{L}^p$-space in an explicit and quantitative manner, without the need for pre-existing convergence results on a dense subspace.
Later in the text we will also use the \emph{biparameter $\varrho$-variation seminorm}, defined for a function of two variables $b\colon(0,\infty)\times(0,\infty)\to\mathbb{C}$ as
\[ \|b\|_{\widetilde{\textup{W}}^{\varrho}} := \sup_{\substack{m,n\in\mathbb{N}\\ 0<\varepsilon_0<\varepsilon_1<\cdots<\varepsilon_m\\ 0<\eta_0<\eta_1<\cdots<\eta_n}}
\Big( \sum_{\substack{1\leq j\leq m\\ 1\leq k\leq n}}  \big| b(\varepsilon_{j},\eta_{k})-b(\varepsilon_{j},\eta_{k-1})-b(\varepsilon_{j-1},\eta_{k})+b(\varepsilon_{j-1},\eta_{k-1}) \big|^{\varrho} \Big)^{1/\varrho}. \]

It is natural to consider more general averaging functions $\chi$; this has already been suggested (albeit somewhat implicitly) in the papers \cite{MRW16:maxrestr,V17:maxrestr}.
It is clear from the proof in \cite{V17:maxrestr} that the function $\chi$ does not need to be smooth. One can, for instance, take $\chi$ to be the $\textup{L}^1$-normalized indicator function of the unit ball in $\mathbb{R}^3$, in which case the $(\widehat{f}\ast\chi_{\varepsilon})(\omega)$ become the usual Hardy--Littlewood averages of $\widehat{f}$ over Euclidean balls $\textup{B}(\omega,\varepsilon)$,
\[ \frac{1}{|\textup{B}(\omega,\varepsilon)|} \int_{\textup{B}(\omega,\varepsilon)} \widehat{f}(y) \textup{d}y. \]
Moreover, Ramos \cite{Ra18} concluded that, for each $f\in\textup{L}^{p}(\mathbb{R}^3)$ and $1\leq p\leq 4/3$, almost every point on the sphere is a Lebesgue point of $\widehat{f}$, i.e.\@ for $\sigma$-a.e.\@ $\omega\in\mathbb{S}^2$ we have that
\[ \lim_{\varepsilon\to0^+} \frac{1}{|B(\omega,\varepsilon)|} \int_{B(\omega,\varepsilon)} \big|\widehat{f}(y) - (\mathcal{R}f)(y) \big| \,\textup{d}y = 0. \]
Prior to \cite{Ra18} this had been confirmed by Vitturi \cite{V17:maxrestr} for functions $f\in\textup{L}^{p}(\mathbb{R}^3)$, $1\leq p\leq 8/7$, who repeated the two-dimensional argument of M\"{u}ller, Ricci, and Wright \cite{MRW16:maxrestr}.

Subsequent papers \cite{Kov19:var} and \cite{Ra19}, which appeared after the first version of the present paper, generalize the averaging procedure even further, by convolving $\widehat{f}$ with certain averaging measures $\mu$.
In light of this more recent research, we now take the opportunity to both generalize \eqref{eq:varrest} and complete our short survey of maximal and variational Fourier restriction theories with papers that appeared in the meantime.
In what follows, $\mu$ will be a finite complex measure defined on the Borel sets in $\mathbb{R}^3$; its dilates $\mu_{\varepsilon}$ are now defined as
\[ \mu_{\varepsilon}(E):=\mu(\varepsilon^{-1}E), \]
for every Borel set $E\subseteq\mathbb{R}^3$ and $\varepsilon>0$. For reasons of elegance, one can additionally assume that $\mu$ is {\it normalized} by $\mu(\mathbb{R}^3)=1$ and that it is {\it even}, i.e., centrally symmetric with respect to the origin, which means that
\[ \mu(-E)=\mu(E), \]
for each Borel set $E\subseteq \mathbb{R}^3$.

In \cite{Kov19:var} one of the present authors showed analogues of \eqref{eq:maxrestriction} and \eqref{eq:varrest} when $\chi$ is replaced by a measure $\mu$ whose Fourier transform $\widehat{\mu}$ is $\textup{C}^\infty$ and satisfies the decay condition
\begin{equation}\label{eq:decaycond}
|\nabla\widehat{\mu}(x)| \lesssim (1+|x|)^{-1-\delta},
\end{equation}
for some $\delta>0$.
%Here and in what follows, $|v|$ denotes the Euclidean norm of a vector $v\in\mathbb{R}^3$.
It is interesting to make the following observation, which applies to the cases of two or three dimensions only, as things improve in higher dimensions. If one takes $\mu$ to be the normalized spherical measure, i.e.\@ $\mu=\sigma/\sigma(\mathbb{S}^2)$, then the decay of $|\nabla\widehat{\mu}(x)|$ as $|x|\to\infty$ is only $O(|x|^{-1})$; see \cite{AS92:hmf}. Consequently, the results from \cite{Kov19:var} do not apply. This was one of the sources of motivation for Ramos \cite{Ra19}, who reused Vitturi's argument from \cite{V17:maxrestr} to conclude that the maximal estimate
\[ \Big\| \sup_{\varepsilon>0} \big| (\widehat{f}\ast\mu_{\varepsilon})(\omega) \big| \Big\|_{\textup{L}^2_{\omega}(\mathbb{S}^2,\sigma)} \lesssim_{\mu} \|f\|_{\textup{L}^{4/3}(\mathbb{R}^3)} \]
holds as soon as the maximal operator $h\mapsto\sup_{\varepsilon>0}(h\ast\mu_{\varepsilon})$ is bounded on $\textup{L}^2(\mathbb{R}^3)$. Relating this to the work of Rubio de Francia \cite{RdF86:max}, he further deduced the following sufficient condition in terms of the decays of $\widehat{\mu}$ and $\nabla\widehat{\mu}$:
\[ |\widehat{\mu}(x)| \lesssim (1+|x|)^{-\alpha} \text{ and } |\nabla\widehat{\mu}(x)| \lesssim (1+|x|)^{-\beta}, \text{ with } \alpha+\beta>1. \]
This condition includes the spherical measure as it satisfies $|\widehat{\mu}(x)|=O(|x|^{-1})$ and $|\nabla\widehat{\mu}(x)|=O(|x|^{-1})$ as $|x|\to\infty$.

The main result of this note is a variational estimate which generalizes \eqref{eq:varrest} slightly beyond the previously covered cases of the averaging functions $\chi$ or measures $\mu$.

\begin{theorem}\label{thm:maintheorem}
Suppose that $\varrho\in(1,\infty)$, and that $\mu$ is a normalized, even complex measure defined on the Borel subsets of $\mathbb{R}^3$, and satisfying \emph{any} of the following three conditions:
\begin{itemize}
\item[(a)] $-x\cdot\nabla\widehat{\mu}(x)\geq 0$ for each $x\in\mathbb{R}^3$, and
\begin{equation}\label{eq:condvariation}
\Big\| \big\| (h\ast\mu_{\varepsilon})(x) \big\|_{\widetilde{\textup{V}}^{\varrho}_{\varepsilon}} \Big\|_{\textup{L}_x^{2}(\mathbb{R}^3)}
\lesssim_{\mu,\varrho} \|h\|_{\textup{L}^{2}(\mathbb{R}^3)}
\end{equation}
holds for each Schwartz function $h;$
\item[(b)] $\varrho\in(2,\infty)$, while $\widehat{\mu}$ is $\textup{C}^2$ and satisfies the decay condition \eqref{eq:decaycond}$;$
\item[(c)] the inequality
\begin{equation}\label{eq:conddouble}
\Big\| \big\| (h\ast\mu_{\varepsilon}\ast\overline{\mu}_{\eta})(x) \big\|_{\widetilde{\textup{W}}^{\varrho}_{\varepsilon,\eta}} \Big\|_{\textup{L}_x^{2}(\mathbb{R}^3)}
\lesssim_{\mu,\varrho} \|h\|_{\textup{L}^{2}(\mathbb{R}^3)}
\end{equation}
holds for each Schwartz function $h$.
\end{itemize}
Then, for each Schwartz function $f$, the following estimate holds:
\begin{equation}\label{eq:varrestmu}
\Big\| \big\| (\widehat{f}\ast\mu_{\varepsilon})(\omega) \big\|_{\textup{V}^{\varrho}_{\varepsilon}} \Big\|_{\textup{L}^2_{\omega}(\mathbb{S}^2,\sigma)} \lesssim_{\mu,\varrho} \|f\|_{\textup{L}^{4/3}(\mathbb{R}^3)}.
\end{equation}
\end{theorem}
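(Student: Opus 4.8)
The plan is to transfer the problem from $\mathbb{S}^2$ to $\mathbb{R}^3$ through the non-oscillatory reformulation \eqref{eq:restriction2}, after first reducing the norm $\textup{V}^\varrho$ to the seminorm $\widetilde{\textup{V}}^\varrho$ and then squaring. For the first reduction, fix $\omega$ and set $a(\varepsilon):=(\widehat f\ast\mu_\varepsilon)(\omega)=\int_{\mathbb{R}^3}\widehat f(\omega-\varepsilon z)\,\textup{d}\mu(z)$. Since $\widehat f$ is bounded and $\mu$ is finite, dominated convergence gives $a(\varepsilon)\to\mu(\{0\})\,\widehat f(\omega)$ as $\varepsilon\to\infty$, so $\lim_{\varepsilon\to\infty}a(\varepsilon)$ exists and is $O(|\widehat f(\omega)|)$. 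The elementary comparison $\|a\|_{\textup{V}^\varrho}\lesssim_\varrho|\lim_{\varepsilon\to\infty}a(\varepsilon)|+\|a\|_{\widetilde{\textup{V}}^\varrho}$ then reduces \eqref{eq:varrestmu} to the same estimate with $\widetilde{\textup{V}}^\varrho$ in place of $\textup{V}^\varrho$, because the contribution of the limiting term is $\lesssim\|\widehat f\|_{\textup{L}^2(\mathbb{S}^2,\sigma)}\lesssim\|f\|_{\textup{L}^{4/3}(\mathbb{R}^3)}$ by the Tomas--Stein inequality \eqref{eq:restriction}.

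The second step linearizes the square. For a scalar function $a$, the product $b(\varepsilon,\eta):=a(\varepsilon)\overline{a(\eta)}$ has mixed second difference $\big(a(\varepsilon_j)-a(\varepsilon_{j-1})\big)\overline{\big(a(\eta_k)-a(\eta_{k-1})\big)}$, so testing $\widetilde{\textup{W}}^\varrho$ on the diagonal partition and using the product structure for the reverse bound give the identity $\|a\|_{\widetilde{\textup{V}}^\varrho}^2=\|b\|_{\widetilde{\textup{W}}^\varrho}$. Applying this pointwise in $\omega$ with $a=(\widehat f\ast\mu_{\cdot})(\omega)$ reduces the square of the desired estimate to
\[
\int_{\mathbb{S}^2}\big\|(\widehat f\ast\mu_\varepsilon)(\omega)\,\overline{(\widehat f\ast\mu_\eta)(\omega)}\big\|_{\widetilde{\textup{W}}^\varrho_{\varepsilon,\eta}}\,\textup{d}\sigma(\omega)\lesssim_{\mu,\varrho}\|f\|_{\textup{L}^{4/3}(\mathbb{R}^3)}^2,
\]
in which the two scales $\varepsilon,\eta$ and the conjugate measure $\overline{\mu}_\eta$ already mirror the structure of \eqref{eq:conddouble}.

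The heart of the matter is to transfer this last inequality to $\mathbb{R}^3$. Using that $\mu$, and hence each $\mu_\varepsilon$, is even, one has the identity $(\widehat f\ast\mu_\varepsilon)(\omega)=\widehat{f\,m_\varepsilon}(\omega)$ with $m_\varepsilon(x):=\widehat\mu(\varepsilon x)$, so that every mixed second difference of the product $(\widehat f\ast\mu_\varepsilon)(\omega)\overline{(\widehat f\ast\mu_\eta)(\omega)}$ is an oscillatory integral pairing $f\otimes\overline f$ with the differences of $m_\varepsilon$ and of $\overline{m_\eta}$. Linearizing the $\widetilde{\textup{W}}^\varrho$-seminorm---selecting, measurably in $\omega$, admissible sequences $(\varepsilon_j),(\eta_k)$ together with coefficients of unit $\ell^{\varrho'}$-norm---recasts the left-hand side as a bilinear form over $\mathbb{S}^2\times\mathbb{S}^2$ of exactly the type handled by \eqref{eq:restriction2}, now with the two sphere variables $\omega,\omega'$ carrying the two scale parameters $\varepsilon,\eta$. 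Applying \eqref{eq:restriction2} removes the two spherical integrations and leaves precisely the full-space biparameter $\varrho$-variation of $h\ast\mu_\varepsilon\ast\overline\mu_\eta$ measured in $\textup{L}^2(\mathbb{R}^3)$, which hypothesis (c) bounds; this closes that case. I expect this to be the main obstacle: since the linearizing data depend on $\omega$, the bilinear form does not factor through $\omega-\omega'$, and one must follow Vitturi \cite{V17:maxrestr} in placing the $\mathbb{R}^3$-variation operator into the flexible ``$h$-slot'' of \eqref{eq:restriction2} and invoking its $\textup{L}^2(\mathbb{R}^3)$-boundedness, rather than passing absolute values inside---the latter destroys the oscillation on which the restriction estimate depends, and a direct Minkowski bound is indeed too lossy.

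Finally, conditions (a) and (b) are each shown to imply the full-space estimate \eqref{eq:conddouble} used above. On the Fourier side $\widehat{h\ast\mu_\varepsilon\ast\overline\mu_\eta}(\xi)=\widehat h(\xi)\,\widehat\mu(\varepsilon\xi)\,\overline{\widehat\mu(\eta\xi)}$, so the two scales act through the factorized multipliers $\widehat\mu(\varepsilon\xi)$ and $\overline{\widehat\mu(\eta\xi)}$. Under (a) the sign condition $-x\cdot\nabla\widehat\mu(x)\geq0$ says precisely that $\varepsilon\mapsto\widehat\mu(\varepsilon\xi)$ is nonincreasing for every $\xi$, a Gaussian-domination--type monotonicity and positivity that permit the single-parameter bound \eqref{eq:condvariation} (and its analogue for $\overline\mu$) to be iterated across the two scales in vector-valued form, yielding \eqref{eq:conddouble} for all $\varrho\in(1,\infty)$. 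Under (b), the smoothness and decay \eqref{eq:decaycond} of $\widehat\mu$ give the one-parameter variation bound \eqref{eq:condvariation}, for $\varrho>2$, by the square-function techniques of \cite{Kov19:var}, and the independence of the two scale variables upgrades this to the biparameter bound \eqref{eq:conddouble} in the same range. In all three cases the problem is thus reduced to \eqref{eq:conddouble}, which completes the proof.
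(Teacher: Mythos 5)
Your first two reductions are sound: the passage from $\textup{V}^{\varrho}$ to $\widetilde{\textup{V}}^{\varrho}$ (the paper does the same, anchoring at $\varepsilon=1$ rather than at the limit $\varepsilon\to\infty$), and the identity $\|a\|_{\widetilde{\textup{V}}^{\varrho}}^{2}=\|a(\varepsilon)\overline{a(\eta)}\|_{\widetilde{\textup{W}}^{\varrho}_{\varepsilon,\eta}}$ is correct, since the mixed second difference of $a(\varepsilon)\overline{a(\eta)}$ factors and the supremum over pairs of partitions splits. The fatal gap is at the step you yourself call the heart of the matter. After your pointwise squaring, the quantity to bound is
\[
\int_{\mathbb{S}^2}\Big\|\big(\widehat{f}\ast\mu_{\varepsilon}\big)(\omega)\,\overline{\big(\widehat{f}\ast\mu_{\eta}\big)(\omega)}\Big\|_{\widetilde{\textup{W}}^{\varrho}_{\varepsilon,\eta}}\,\textup{d}\sigma(\omega),
\]
and linearizing the $\widetilde{\textup{W}}^{\varrho}$-seminorm produces coefficients $c_{j,k}(\omega)$ of unit $\ell^{\varrho'}$-norm and the expression
\[
\int_{\mathbb{S}^2}\sum_{j,k}c_{j,k}(\omega)\,\big(\widehat{f}\ast(\mu_{\varepsilon_{j-1}(\omega)}-\mu_{\varepsilon_{j}(\omega)})\big)(\omega)\,\overline{\big(\widehat{f}\ast(\mu_{\eta_{k-1}(\omega)}-\mu_{\eta_{k}(\omega)})\big)(\omega)}\,\textup{d}\sigma(\omega).
\]
This is an integral over a \emph{single} sphere variable, in which $f$ appears twice and both scale parameters are attached to the same $\omega$; no second sphere variable $\omega'$ and no function $h\in\textup{L}^2(\mathbb{R}^3)$ ever appear, so there is nothing of the shape \eqref{eq:restriction2} to apply, and hypothesis \eqref{eq:conddouble} has no way to enter. (Undoing the linearization by H\"{o}lder in $(j,k)$ just returns you to the starting point.) The paper creates the two sphere variables in the opposite order: it first dualizes the linearized variation norm in the mixed $\textup{L}^2_\omega(\ell^{\varrho}_j)$ sense, producing sphere functions $\mathbf{g}=(g_j)$ and an extension-type operator $\mathcal{E}(\mathbf{g})$, while $f$ is discarded by H\"{o}lder against $\|\mathcal{E}(\mathbf{g})\|_{\textup{L}^4(\mathbb{R}^3)}$; only then does it square, and expanding $|\mathcal{E}(\mathbf{g})(x)|^2$ against $\widehat{h}(x)$, $h\in\textup{L}^2(\mathbb{R}^3)$, is what yields a genuine integral over $(\mathbb{S}^2)^2$, with the partition of $\omega$ in the $\varepsilon$-slot, the partition of $\omega'$ in the $\eta$-slot, and $(h\ast\mu_{\varepsilon}\ast\overline{\mu}_{\eta})(\omega-\omega')$ sitting in the flexible slot of \eqref{eq:restriction2}. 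Squaring pointwise on the sphere before dualizing, as you do, locks $\varepsilon$ and $\eta$ to the same point, and the argument cannot proceed.

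The second gap is your treatment of (a) and (b) by reduction to (c). You assert that the scalar one-parameter bound \eqref{eq:condvariation} can be ``iterated across the two scales in vector-valued form'' to yield \eqref{eq:conddouble}, and, under (b), that ``independence of the two scale variables'' upgrades the one-parameter estimate to the biparameter one. Neither is a formal consequence. Writing $H_k:=h\ast(\overline{\mu}_{\eta_k}-\overline{\mu}_{\eta_{k-1}})$, the iteration would need an $\ell^{\varrho}$-valued variational inequality of the type
\[
\Big\|\Big(\sum_{k}\big\|(H_k\ast\mu_{\varepsilon})(x)\big\|_{\widetilde{\textup{V}}^{\varrho}_{\varepsilon}}^{\varrho}\Big)^{1/\varrho}\Big\|_{\textup{L}^2_x(\mathbb{R}^3)}\lesssim_{\mu,\varrho}\Big\|\Big(\sum_{k}|H_k|^{\varrho}\Big)^{1/\varrho}\Big\|_{\textup{L}^2(\mathbb{R}^3)},
\]
uniformly over partitions; vector-valued extensions of this kind are automatic only for $\ell^2$ (Marcinkiewicz--Zygmund), and for $\ell^{\varrho}$ with $\varrho\neq2$ and non-positive difference operators they require a separate proof and can fail. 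The paper's remark that condition (c) is ``artificial and difficult to verify'' reflects precisely that no implication (a)$\Rightarrow$(c) or (b)$\Rightarrow$(c) is claimed there. Instead, (a) is handled directly: the sign hypothesis makes $\vartheta(tx)\,\textup{d}t/t$, with $\vartheta(x)=-x\cdot\nabla\widehat{\mu}(x)$, a nonnegative measure whose total mass $\widehat{\mu}(\varepsilon_{\textup{min}}x)-\widehat{\mu}(\varepsilon_{\textup{max}}x)$ is bounded, so Cauchy--Schwarz in $t$ reduces everything to the \emph{one-parameter} variation $\|h\ast\mu_{\varepsilon}\|_{\widetilde{\textup{V}}^{\varrho}_{\varepsilon}}$ in the $h$-slot of \eqref{eq:restriction2}, which is exactly what \eqref{eq:condvariation} bounds; and (b) is reduced to the Gaussian instance of (a) via Stein's Gaussian domination \eqref{eq:gausslemma3} and Minkowski's inequality, not to (c). Your reading of the sign condition as monotonicity of $\varepsilon\mapsto\widehat{\mu}(\varepsilon\xi)$ is accurate but plays no role in your sketch; its actual purpose is to legitimize that Cauchy--Schwarz step.
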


Let us immediately clarify one minor technical issue. Theorem~\ref{thm:maintheorem} claims estimate \eqref{eq:varrestmu} for Schwartz functions $f$ only, but it immediately extends to all $f\in\textup{L}^{4/3}(\mathbb{R}^3)$ whenever $\mu$ is absolutely continuous with respect to Lebesgue measure. Otherwise, we could run into measurability issues on the left-hand side of \eqref{eq:varrestmu} for singular measures $\mu$.

Condition (a) in Theorem \ref{thm:maintheorem} is quite restrictive, but it is satisfied at least when $\varrho>2$ and the Radon--Nikodym density of $\mu$ is a radial Gaussian function. Indeed, if $\textup{d}\mu(x)=\alpha^3 e^{-\pi\alpha^2|x|^2}\,\textup{d}x$ for some $\alpha\in(0,\infty)$, then
\begin{equation}\label{eq:forGauss}
-x\cdot\nabla\widehat{\mu}(x) = 2\pi\alpha^{-2}|x|^2 e^{-\pi\alpha^{-2}|x|^2}
\end{equation}
is nonnegative, and \eqref{eq:condvariation} is a standard estimate by Bourgain \cite[Lemma 3.28]{B89:pt}. In fact, Bourgain formulated \eqref{eq:condvariation} for one-dimensional Schwartz averaging functions in \cite{B89:pt}, but the proof carries over to higher dimensions. Alternatively, one can invoke more general results from the subsequent literature, such as the work of Jones, Seeger, and Wright \cite{JSW08:var}, which covered higher-dimensional convolutions, more general dilation structures, and both strong and weak-type variational estimates in a range of $\textup{L}^p$-spaces.

Theorem~\ref{thm:maintheorem} under condition (b) was covered by the paper \cite{Kov19:var}, up to minor technicalities, such as the fact that here we do not need $\widehat{\mu}$ to be smoother than $\textup{C}^2$. However, \cite{Kov19:var} was concerned with more general surfaces and more general measures $\sigma$ on them, while here we are able to give a more direct proof that is specific to the sphere and to the stated choice of Lebesgue space exponents. In fact, as we have already noted, the present proof predates \cite{Kov19:var}.

Condition (c) above is somewhat artificial and difficult to verify, but we include it since the proof that it implies \eqref{eq:varrestmu} will be the most straightforward.

Maximal restriction estimates have found a nice application in the very recent work of Bilz \cite{B20}, who used them to show that there exists a compact set $E\subset\mathbb{R}^3$ of full Hausdorff dimension that does not allow any nontrivial a priori Fourier restriction estimates for any nontrivial Borel measure on $E$. We do not discuss the details here, but rather refer an interested reader to \cite{B20}.

\subsection{Notation}
If $A,B\colon X\to[0,\infty)$ are two functions (or functional expressions) such that,  for each $x\in X$,  $A(x)\leq CB(x)$ for some unimportant constant $0\leq C<\infty$, then we write $A(x)\lesssim B(x)$ or $A(x) = O(B(x))$. If the constant $C$ depends on a set of parameters $P$, we emphasize it notationally by writing $A(x)\lesssim_P B(x)$ or $A(x) = O_P(B(x))$.

We write a variable in the subscript of the letter denoting a function space whenever we need to emphasize with respect to which variable the corresponding (semi)norm is taken. For instance, $\|g(\omega)\|_{\textup{L}^p_\omega}$ can be written in place of $\|g\|_{\textup{L}^p}$, while $\|g(\varepsilon)\|_{\textup{V}^\varrho_\varepsilon}$ can be written in place of $\|g\|_{\textup{V}^\varrho}$, whenever the variables $\omega,\varepsilon$ need to be written explicitly.

The {\it Fourier transform} of a function $f\in\textup{L}^1(\mathbb{R}^3)$ is normalized as
\[ \widehat{f}(y) := \int_{\mathbb{R}^3} f(x) e^{-2\pi i x\cdot y} \textup{d}x, \]
for each $y\in\mathbb{R}^3$.
Here $x\cdot y$ denotes the standard scalar product of vectors $x,y\in\mathbb{R}^3$, and integration is performed with respect to the Lebesgue measure.
The map $f\mapsto\widehat{f}$ is then extended, as usual, by continuity to bounded linear operators $\textup{L}^p(\mathbb{R}^3)\to\textup{L}^{p'}(\mathbb{R}^3)$ for each $p\in(1,2]$ and $p'=p/(p-1)$.

More generally, the Fourier transform of a complex measure $\mu$ is the function $\widehat{\mu}$ defined as
\[ \widehat{\mu}(y) := \int_{\mathbb{R}^3} e^{-2\pi i x\cdot y} \textup{d}\mu(x), \]
for each $y\in\mathbb{R}^3$.

The set of complex-valued Schwartz functions on $\mathbb{R}^3$ will be denoted by $\mathcal{S}(\mathbb{R}^3)$.

The remainder of this paper is devoted to the proof of Theorem~\ref{thm:maintheorem}.

\section{Proof of Theorem~\ref{thm:maintheorem}}

We need to establish \eqref{eq:varrestmu} assuming any one of the three conditions from the statement of Theorem~\ref{thm:maintheorem}.
Let us start with condition (a).

\begin{proof}[Proof of Theorem~\ref{thm:maintheorem} assuming condition (a)]
Start by observing that
\[ \sup_{\varepsilon_0>0} \big|\widehat{f}\ast\mu_{\varepsilon_0}\big| \leq \big|\widehat{f}\ast\mu\big| + \sup_{\varepsilon_0>0} \big|\widehat{f}\ast(\mu_{\varepsilon_0}-\mu_1)\big|, \]
that $\widehat{f}\ast\mu = (f\widehat{\mu})\widehat{}\,$, and that the ordinary restriction estimate \eqref{eq:restriction} applies to $f\widehat{\mu}$ and yields
\[ \big\| \widehat{f}\ast\mu \big\|_{\textup{L}^2(\mathbb{S}^2,\sigma)} \lesssim \|f\widehat{\mu}\|_{\textup{L}^{4/3}(\mathbb{R}^3)} \lesssim_{\mu} \|f\|_{\textup{L}^{4/3}(\mathbb{R}^3)}. \]
Thus, inequality \eqref{eq:varrestmu} reduces to two applications of
\begin{equation}\label{eq:varrest0}
\Big\| \big\| (\widehat{f}\ast\mu_{\varepsilon})(\omega) \big\|_{\widetilde{\textup{V}}^{\varrho}_{\varepsilon}} \Big\|_{\textup{L}^2_{\omega}(\mathbb{S}^2,\sigma)} \lesssim_{\mu,\varrho} \|f\|_{\textup{L}^{4/3}(\mathbb{R}^3)},
\end{equation}
which we proceed to establish.
The desired estimate \eqref{eq:varrest0} unfolds as
\begin{equation}\label{eq:varrest1}
\bigg\| \sup_{\substack{m\in\mathbb{N}\\ 0<\varepsilon_0<\varepsilon_1<\cdots<\varepsilon_m}} \Big( \sum_{j=1}^{m} \big| \big(\widehat{f} \ast (\mu_{\varepsilon_{j-1}} - \mu_{\varepsilon_{j}}) \big) (\omega) \big|^\varrho\Big)^{1/\varrho} \bigg\|_{\textup{L}^2_{\omega}(\mathbb{S}^2,\sigma)}
\lesssim_{\mu,\varrho} \|f\|_{\textup{L}^{4/3}(\mathbb{R}^3)}.
\end{equation}
The numbers $\varepsilon_j$ in the above supremum can be restricted to a fixed interval $[\varepsilon_\textup{min},\varepsilon_\textup{max}]$ with $0<\varepsilon_\textup{min}<\varepsilon_\textup{max}$, but the estimate needs to be established with a constant independent of $\varepsilon_\textup{min}$ and $\varepsilon_\textup{max}$. Afterwards one simply applies the monotone convergence theorem letting $\varepsilon_\textup{min}\to0^+$ and $\varepsilon_\textup{max}\to\infty$. Moreover, by only increasing the left-hand side of \eqref{eq:varrest1}, we can also achieve $\varepsilon_0=\varepsilon_\textup{min}$ and $\varepsilon_m=\varepsilon_\textup{max}$.

Next, by continuity one may further restrict attention to rational numbers in the interval $[\varepsilon_\textup{min},\varepsilon_\textup{max}]$, and by yet another application of the monotone convergence theorem one may consider only finitely many values in that interval.
In this way, no generality is lost in assuming that the supremum in \eqref{eq:varrest1} is achieved for some $m\in\mathbb{N}$ and for some measurable functions $\varepsilon_k\colon\mathbb{S}^2\to[\varepsilon_\textup{min},\varepsilon_\textup{max}]$, $k\in\{0,1,\ldots,m\}$, such that $\varepsilon_0(\omega)\equiv\varepsilon_{\textup{min}}$, $\varepsilon_m(\omega)\equiv\varepsilon_{\textup{max}}$.
Estimate \eqref{eq:varrest1} then becomes
\[ \Big\| \Big( \sum_{j=1}^{m} \big| \big(\widehat{f} \ast (\mu_{\varepsilon_{j-1}(\omega)} - \mu_{\varepsilon_{j}(\omega)}) \big) (\omega) \big|^\varrho\Big)^{1/\varrho} \Big\|_{\textup{L}^2_{\omega}(\mathbb{S}^2,\sigma)} \lesssim_{\mu,\varrho} \|f\|_{\textup{L}^{4/3}(\mathbb{R}^3)}. \]
Once again, the implicit constant needs to be independent of $m$ and the functions $\varepsilon_k$. The reduction we just performed is an instance of the Kolmogorov--Seliverstov--Plessner linearization method used in \cite{MRW16:maxrestr}.

Dualizing the mixed $\textup{L}_\omega^2(\ell_j^\varrho)$-norm, see \cite{BP61}, we turn the latter estimate into
\begin{equation}\label{eq:varrest2}
\big| \Lambda(f,\mathbf{g}) \big| \lesssim_{\mu,\varrho} \|f\|_{\textup{L}^{4/3}(\mathbb{R}^3)} \Big\| \Big( \sum_{j=1}^{m} |g_j|^{\varrho'} \Big)^{1/\varrho'} \Big\|_{\textup{L}^2(\mathbb{S}^2,\sigma)},
\end{equation}
where the bilinear form $\Lambda$ is defined via
\[ \Lambda(f,\mathbf{g}) := \int_{\mathbb{S}^2} \sum_{j=1}^{m} \big(\widehat{f} \ast (\mu_{\varepsilon_{j-1}(\omega)} - \mu_{\varepsilon_{j}(\omega)}) \big) (\omega) \overline{g_j(\omega)} \,\textup{d}\sigma(\omega). \]
Here, $\varrho'=\varrho/(\varrho-1)$ denotes the exponent conjugate to $\varrho$ as usual, and $g_j\colon\mathbb{S}^2\to\mathbb{C}$ are arbitrary measurable functions, $j\in\{1,2,\ldots,m\}$, gathered in a single vector-valued function $\mathbf{g}=(g_j)_{j=1}^{m}$.
By elementary properties of the Fourier transform, $\Lambda$ can be rewritten as
\[ \Lambda(f,\mathbf{g}) = \int_{\mathbb{R}^3} f(x) \overline{\mathcal{E}(\mathbf{g})(x)} \textup{d}x, \]
where $\mathcal{E}$ is a certain extension-type operator given by
\begin{equation}\label{eq:defofE}
\mathcal{E}(\mathbf{g})(x) := \int_{\mathbb{S}^2} \sum_{j=1}^{m} \Big( \widehat{\mu}\big(\varepsilon_{j-1}(\omega)x\big) - \widehat{\mu}\big(\varepsilon_{j}(\omega)x\big) \Big) g_j(\omega) e^{2\pi i x\cdot\omega} \,\textup{d}\sigma(\omega).
\end{equation}
By H\"{o}lder's inequality, \eqref{eq:varrest2} is in turn equivalent to
\begin{equation}\label{eq:varrest3}
\| \mathcal{E}(\mathbf{g}) \|_{\textup{L}^{4}(\mathbb{R}^3)}
\lesssim_{\mu,\varrho} \Big\| \Big( \sum_{j=1}^{m} |g_j|^{\varrho'} \Big)^{1/\varrho'} \Big\|_{\textup{L}^2(\mathbb{S}^2,\sigma)}.
\end{equation}

If we denote
\begin{equation}\label{eq:defofvartheta}
\vartheta(x) := - x \cdot (\nabla\widehat{\mu})(x),
\end{equation}
then we also have
\[ - t \frac{\textup{d}}{\textup{d}t} \widehat{\mu}(tx) = -(tx) \cdot (\nabla\widehat{\mu})(tx) = \vartheta(tx), \]
for any $x\in\mathbb{R}^3$, which in turn implies
\begin{equation}\label{eq:fundthmcalc}
\widehat{\mu}(ax) - \widehat{\mu}(bx) = \int_{a}^{b} \vartheta(t x) \frac{\textup{d}t}{t},
\end{equation}
for any $0<a<b$.
Substituting this into the definition of $\mathcal{E}$, yields
\[ \mathcal{E}(\mathbf{g})(x) = \int_{\varepsilon_{\textup{min}}}^{\varepsilon_{\textup{max}}} \vartheta(tx) \int_{\mathbb{S}^2}
g_{j(t,\omega)}(\omega) e^{2\pi i x\cdot\omega} \,\textup{d}\sigma(\omega) \frac{\textup{d}t}{t}, \]
where, for each $t\in[\varepsilon_{\textup{min}},\varepsilon_{\textup{max}})$ and each $\omega\in\mathbb{S}^2$, we denote by $j(t,\omega)$ the unique index $j\in\{1,2,\ldots,m\}$ such that $t\in[\varepsilon_{j-1}(\omega),\varepsilon_{j}(\omega))$.
Since $\vartheta(x)\geq 0$, by the standing assumption in (a), we can apply the Cauchy--Schwarz inequality in the variable $t$ to estimate
\begin{equation}\label{eq:auxest1}
| \mathcal{E}(\mathbf{g})(x) |^2 \lesssim_{\mu} \mathcal{A}(x) \mathcal{B}(\mathbf{g})(x),
\end{equation}
where
\[ \mathcal{A}(x) := \int_{\varepsilon_{\textup{min}}}^{\varepsilon_{\textup{max}}} \vartheta(tx) \frac{\textup{d}t}{t} \]
and
\begin{equation}\label{eq:calbdef}
\mathcal{B}(\mathbf{g})(x) := \int_{\varepsilon_{\textup{min}}}^{\varepsilon_{\textup{max}}} \vartheta(tx) \Big| \int_{\mathbb{S}^2} g_{j(t,\omega)}(\omega) e^{2\pi i x\cdot\omega} \,\textup{d}\sigma(\omega) \Big|^2 \frac{\textup{d}t}{t}.
\end{equation}
By \eqref{eq:fundthmcalc},
\begin{equation}\label{eq:auxest2}
\mathcal{A}(x) = \widehat{\mu}\big(\varepsilon_{\textup{min}}x\big) - \widehat{\mu}\big(\varepsilon_{\textup{max}}x\big) \lesssim 1.
\end{equation}
From \eqref{eq:auxest1} and \eqref{eq:auxest2}, we see that \eqref{eq:varrest3} will be established once we prove
\begin{equation}\label{eq:varrest9}
\| \mathcal{B}(\mathbf{g}) \|_{\textup{L}^{2}(\mathbb{R}^3)}
\lesssim_{\mu,\varrho} \Big\| \Big( \sum_{j=1}^{m} |g_j|^{\varrho'} \Big)^{1/\varrho'} \Big\|_{\textup{L}^2(\mathbb{S}^2,\sigma)}^2 .
\end{equation}

Expanding out the square in the definition of $\mathcal{B}(\mathbf{g})(x)$ yields
\begin{equation}\label{eq:calb}
\mathcal{B}(\mathbf{g})(x)
= \int_{(\mathbb{S}^2)^2} \int_{\varepsilon_{\textup{min}}}^{\varepsilon_{\textup{max}}} g_{j(t,\omega)}(\omega) \overline{g_{j(t,\omega')}(\omega')} \,e^{2\pi i x\cdot(\omega-\omega')} \,\vartheta(tx) \frac{\textup{d}t}{t} \,\textup{d}\sigma(\omega) \,\textup{d}\sigma(\omega').
\end{equation}
For fixed $\omega,\omega'\in\mathbb{S}^2$, consider
\[ J(\omega,\omega') := \big\{ (j,j')\in\{1,2,\ldots,m\}^2 \,:\, [\varepsilon_{j-1}(\omega),\varepsilon_{j}(\omega)) \cap [\varepsilon_{j'-1}(\omega'),\varepsilon_{j'}(\omega')) \neq\emptyset \big\}. \]
The intersection of two half-open intervals is either the empty set or again a half-open interval.
For each pair $(j,j')\in J(\omega,\omega')$, it follows that there exist unique real numbers $a(j,j',\omega,\omega')$ and $b(j,j',\omega,\omega')$, such that
\begin{equation}\label{eq:intervals}
[\varepsilon_{j-1}(\omega), \varepsilon_{j}(\omega)) \cap [\varepsilon_{j'-1}(\omega'), \varepsilon_{j'}(\omega'))
= [a(j,j',\omega,\omega'), b(j,j',\omega,\omega')).
\end{equation}
Clearly the intervals \eqref{eq:intervals} constitute a finite partition of $[\varepsilon_{\textup{min}},\varepsilon_{\textup{max}})$. Using \eqref{eq:fundthmcalc}, we can rewrite \eqref{eq:calb} as
\begin{align*}
\mathcal{B}(\mathbf{g})(x) & = \int_{(\mathbb{S}^2)^2} \sum_{(j,j')\in J(\omega,\omega')} g_{j}(\omega) \overline{g_{j'}(\omega')}
\,e^{2\pi i x\cdot(\omega-\omega')}
\left(\int_{a(j,j',\omega,\omega')}^{b(j,j',\omega,\omega')} \vartheta(tx) \frac{\textup{d}t}{t} \right)\,\textup{d}\sigma(\omega) \,\textup{d}\sigma(\omega') \\
& = \int_{(\mathbb{S}^2)^2} \sum_{(j,j')\in J(\omega,\omega')} g_{j}(\omega) \overline{g_{j'}(\omega')}
\,e^{2\pi i x\cdot(\omega-\omega')} \\
& \qquad\qquad \times \Big( \widehat{\mu}\big(a(j,j',\omega,\omega')x\big) - \widehat{\mu}\big(b(j,j',\omega,\omega')x\big) \Big) \,\textup{d}\sigma(\omega) \,\textup{d}\sigma(\omega').
\end{align*}
Taking $h\in\mathcal{S}(\mathbb{R}^3)$ and dualizing with $\widehat{h}$ leads to the form
\[ \Theta(\mathbf{g},h) := \int_{\mathbb{R}^3} \mathcal{B}(\mathbf{g})(x) \widehat{h}(x) \,\textup{d}x. \]
By Plancherel's identity we have
\[ \| \mathcal{B}(\mathbf{g}) \|_{\textup{L}^{2}(\mathbb{R}^3)}
= \sup \big\{ |\Theta(\mathbf{g},h)| \,:\, h\in\mathcal{S}(\mathbb{R}^3),\ \|h\|_{\textup{L}^2(\mathbb{R}^3)}=1 \big\}, \]
so \eqref{eq:varrest9} will follow from
\begin{equation}\label{eq:varrest4}
| \Theta(\mathbf{g},h) | \lesssim_{\mu,\varrho} \Big\| \Big( \sum_{j=1}^{m} |g_j|^{\varrho'} \Big)^{1/\varrho'} \Big\|_{\textup{L}^2(\mathbb{S}^2,\sigma)}^2 \|h\|_{\textup{L}^{2}(\mathbb{R}^3)},
\end{equation}
which we proceed to establish.

Using basic properties of the Fourier transform, the form $\Theta$ can be rewritten as
\begin{align*}
\Theta(\mathbf{g},h)
& = \int_{(\mathbb{S}^2)^2} \sum_{(j,j')\in J(\omega,\omega')} g_{j}(\omega) \overline{g_{j'}(\omega')} \\
& \qquad\qquad \times \big(h\ast\mu_{a(j,j',\omega,\omega')} - h\ast\mu_{b(j,j',\omega,\omega')}\big)(\omega-\omega') \,\textup{d}\sigma(\omega) \,\textup{d}\sigma(\omega')
\end{align*}
for any Schwartz function $h$.
Applying H\"{o}lder's inequality to the sum in $(j,j')$, and recalling the definition of the $\varrho$-variation yields
\begin{align*}
|\Theta(\mathbf{g},h)| \leq \int_{(\mathbb{S}^2)^2}
 \Big(\sum_{j=1}^{m}|g_{j}(\omega)|^{\varrho'}\Big)^{1/\varrho'} \Big(\sum_{j=1}^{m}|g_{j}(\omega')|^{\varrho'}\Big)^{1/\varrho'}
 \| (h \ast \mu_\varepsilon)(\omega-\omega') \|_{\widetilde{\textup{V}}^\varrho_\varepsilon} \,\textup{d}\sigma(\omega) \,\textup{d}\sigma(\omega').
\end{align*}
By the usual Tomas--Stein  restriction theorem in the formulation \eqref{eq:restriction2}, applied with $g$ replaced by $\big(\sum_{j=1}^{m}|g_{j}|^{\varrho'}\big)^{1/\varrho'}$ and with $h$ replaced by $\|h\ast\mu_\varepsilon\|_{\widetilde{\textup{V}}^\varrho_\varepsilon}$, we obtain
\[ |\Theta(\mathbf{g},h)| \lesssim \Big\| \Big( \sum_{j=1}^{m} |g_j|^{\varrho'} \Big)^{1/\varrho'} \Big\|_{\textup{L}^2(\mathbb{S}^2,\sigma)}^2
\big\| \|h \ast \mu_\varepsilon\|_{\widetilde{\textup{V}}^\varrho_\varepsilon} \big\|_{\textup{L}^{2}(\mathbb{R}^3)}. \]
Invoking assumption \eqref{eq:condvariation} completes the proof of estimate \eqref{eq:varrest4}, and therefore also that of \eqref{eq:varrestmu}.
\end{proof}

Next, we will impose condition (b) and reduce the proof to the previous one by replacing $\mu$ with a superposition of ``nicer'' measures.

\begin{proof}[Proof of Theorem~\ref{thm:maintheorem} assuming condition (b)]
We can repeat the same steps as before, reducing \eqref{eq:varrestmu} to \eqref{eq:varrest9}, where $\mathcal{B}$ is as in \eqref{eq:calbdef} and $\vartheta$ is defined by \eqref{eq:defofvartheta}.

We have already noted that \eqref{eq:condvariation} is satisfied for measures $\mu$ with Gaussian densities, i.e., when $\textup{d}\mu(x)=\alpha^3 e^{-\pi \alpha^2|x|^2}\,\textup{d}x$ for some $\alpha\in(0,\infty)$, and that in this case \eqref{eq:forGauss} equals $\psi(x/\alpha)$,
where
\[ \psi(x) := 2\pi|x|^2 e^{-\pi|x|^2}. \]
Therefore, the previous proof of \eqref{eq:varrest9} specialized to this measure yields
\begin{equation}\label{eq:varrest8}
\bigg\| \int_{\varepsilon_{\textup{min}}}^{\varepsilon_{\textup{max}}} \psi\Big(\frac{tx}{\alpha}\Big) \Big| \int_{\mathbb{S}^2} g_{j(t,\omega)}(\omega) e^{2\pi i x\cdot\omega} \,\textup{d}\sigma(\omega) \Big|^2 \frac{\textup{d}t}{t} \bigg\|_{\textup{L}^{2}_{x}(\mathbb{R}^3)}
\lesssim_{\varrho} \Big\| \Big( \sum_{j=1}^{m} |g_j|^{\varrho'} \Big)^{1/\varrho'} \Big\|_{\textup{L}^2(\mathbb{S}^2,\sigma)}^2 .
\end{equation}
Also note that the $\textup{L}^1$-normalization of the above Gaussians guarantees that the left-hand side of \eqref{eq:condvariation} does not depend on the parameter $\alpha$. Consequently, the previous proof makes the constant in the bound \eqref{eq:varrest8} independent of $\alpha$ as well.
In this way, estimate \eqref{eq:varrest9} for a general measure $\mu$ satisfying condition (b) will be a consequence of \eqref{eq:varrest8} and Minkowski's inequality for integrals if we can only dominate $\vartheta(x)$ pointwise as follows:
\begin{equation}\label{eq:gausslemma3}
|\vartheta(x)| \lesssim_{\mu,\delta} \int_{1}^{\infty} \psi\Big(\frac{x}{\alpha}\Big) \frac{\textup{d}\alpha}{\alpha^{1+\delta}},
\end{equation}
for each $x\in\mathbb{R}^3$.

Denote by $\Psi(x)$ the right-hand side of \eqref{eq:gausslemma3}, and observe that $\Psi(0)=0$ and $\Psi(x)>0$ for each $x\neq 0$.
By continuity and compactness it suffices to show that the ratio $|\vartheta(x)|/\Psi(x)$ remains bounded as $|x|\to\infty$ or $|x|\to0^+$.

Substituting $r=\pi\alpha^{-2}|x|^2$, we may rewrite $\Psi$ as
\begin{align}
\Psi(x) & = 2\pi |x|^2 \int_{1}^{\infty} e^{-\pi\alpha^{-2} |x|^2} \frac{\textup{d}\alpha}{\alpha^{3+\delta}} \label{eq:gausspsi1} \\
& = \int_{0}^{\pi|x|^2} e^{-r} \Big(\frac{\sqrt{r}}{\sqrt{\pi}|x|}\Big)^{\delta} \,\textup{d}r. \label{eq:gausspsi2}
\end{align}
From \eqref{eq:gausspsi2} we see that $\lim_{|x|\to\infty} \Psi(x)/|x|^{-\delta} \in (0,\infty)$, while decay condition \eqref{eq:decaycond} gives
$|\vartheta(x)|=O(|x|^{-\delta})$ as $|x|\to\infty$.

On the other hand, using Taylor's formula for the function $x\mapsto e^{-\pi\alpha^{-2} |x|^2}$ and substituting into \eqref{eq:gausspsi1}, we easily obtain
\[ \Psi(x) = 2\pi |x|^2 \Big(\frac{1}{2+\delta} + O_{\delta}(|x|^2)\Big) \]
on a neighborhood of the origin. Moreover, $\widehat{\mu}$ is $\textup{C}^2$ and even, since $\mu$ is even, and so we have that $(\nabla\widehat{\mu})(0)=0$. Taylor's formula then yields
\[ \vartheta(x) = O_{\mu}(|x|^2) \]
on a neighborhood of the origin.
It follows that $|\vartheta(x)|/\Psi(x) = O_{\mu,\delta}(1)$ for sufficiently small nonzero $|x|$, and this completes the proof of \eqref{eq:gausslemma3}.
\end{proof}

The Gaussian domination trick which we have just used can be attributed to Stein, see \cite[Chapter~V, \S 3.1]{S70:sing}. It was generalized and used in a slightly different context by Durcik \cite{D15:L4entangled}.

\begin{proof}[Proof of Theorem~\ref{thm:maintheorem} assuming condition (c)]
We can repeat the same steps as before that reduce \eqref{eq:varrestmu} to \eqref{eq:varrest3}.
This time we define the form $\Theta$ differently, via
\[ \Theta(\mathbf{g},h) := \int_{\mathbb{R}^3} |\mathcal{E}(\mathbf{g})(x)|^2 \,\widehat{h}(x) \,\textup{d}x, \]
where $\mathbf{g}$ is as before and $h\in\mathcal{S}(\mathbb{R}^3)$.
Again, by duality, we only need to establish \eqref{eq:varrest4}.

Squaring out \eqref{eq:defofE} and inserting that into the above definition of $\Theta$, we obtain
\begin{align*}
\Theta(\mathbf{g},h)
& = \int_{\mathbb{R}^3} \int_{(\mathbb{S}^2)^2} \sum_{\substack{1\leq j\leq m\\ 1\leq k\leq m}} g_{j}(\omega) \overline{g_{k}(\omega')}
\,\Big( \widehat{\mu}\big(\varepsilon_{j-1}(\omega)x\big) - \widehat{\mu}\big(\varepsilon_{j}(\omega)x\big) \Big) \\
& \qquad\qquad\quad \times \Big( \overline{\widehat{\mu}\big(\varepsilon_{k-1}(\omega')x\big)} - \overline{\widehat{\mu}\big(\varepsilon_{k}(\omega')x\big)} \Big)
\,\widehat{h}(x) \,e^{2\pi i x\cdot(\omega-\omega')} \,\textup{d}\sigma(\omega) \,\textup{d}\sigma(\omega') \,\textup{d}x,
\end{align*}
i.e.,
\begin{align*}
\Theta(\mathbf{g},h)
& = \int_{(\mathbb{S}^2)^2} \sum_{\substack{1\leq j\leq m\\ 1\leq k\leq m}} g_{j}(\omega) \overline{g_{k}(\omega')} \\
& \qquad\qquad \times \big(h \ast ( \mu_{\varepsilon_{j-1}(\omega)} - \mu_{\varepsilon_{j}(\omega)} ) \ast \overline{( \mu_{\varepsilon_{k-1}(\omega)} - \mu_{\varepsilon_{k}(\omega)} )} \,\big) (\omega-\omega') \,\textup{d}\sigma(\omega) \,\textup{d}\sigma(\omega').
\end{align*}
Applying H\"{o}lder's inequality to the sum in $(j,k)$, and recalling the definition of the biparameter $\varrho$-variation seminorm yields
\begin{align*}
|\Theta(\mathbf{g},h)| \leq \int_{(\mathbb{S}^2)^2}
& \Big(\sum_{j=1}^{m}|g_{j}(\omega)|^{\varrho'}\Big)^{1/\varrho'} \Big(\sum_{k=1}^{m}|g_{k}(\omega')|^{\varrho'}\Big)^{1/\varrho'} \\
& \times \| (h \ast \mu_\varepsilon \ast \overline{\mu}_\eta)(\omega-\omega') \|_{\widetilde{\textup{W}}^\varrho_{\varepsilon,\eta}} \,\textup{d}\sigma(\omega) \,\textup{d}\sigma(\omega').
\end{align*}
Using \eqref{eq:restriction2} we obtain
\[ |\Theta(\mathbf{g},h)| \lesssim \Big\| \Big( \sum_{j=1}^{m} |g_j|^{\varrho'} \Big)^{1/\varrho'} \Big\|_{\textup{L}^2(\mathbb{S}^2,\sigma)}^2
\big\| \| h \ast \mu_\varepsilon \ast \overline{\mu}_\eta \|_{\widetilde{\textup{W}}^\varrho_{\varepsilon,\eta}} \big\|_{\textup{L}^{2}(\mathbb{R}^3)}, \]
and it remains to invoke the assumption \eqref{eq:conddouble}. This proves \eqref{eq:varrest4} and, thus, also completes the proof of the theorem assuming condition (c).
\end{proof}

\section*{Acknowledgments}
The authors are grateful to Polona Durcik and Jim Wright for useful discussions surrounding the problem, and to the anonymous referee for a careful reading of the manuscript and valuable suggestions.

V.K. was supported in part by the Croatian Science Foundation under the project UIP-2017-05-4129 (MUNHANAP).
He also acknowledges partial support of the DAAD--MZO bilateral grant \emph{Multilinear singular integrals and applications} and that of the \emph{Fulbright Scholar Program}.
D.O.S. acknowledges support by
the EPSRC New Investigator Award \emph{Sharp Fourier Restriction Theory}, grant no.\@ EP/T001364/1, and the College Early Career Travel Fund of the University of Birmingham.
This work was started during a pleasant visit of the second author to the University of Zagreb, whose hospitality is greatly appreciated.

%%%%%%%%%%%%%%%%%%%%%%%%%%%%%%%%%%%%%%%%%%%%%%%%

\end{document}